\newenvironment{psmallmatrix}
  {\left(\begin{smallmatrix}}
        {\end{smallmatrix}\right)}
\numberwithin{equation}{section}
\newtheorem{thm}{Theorem}[section]
\newtheorem{defn}[thm]{Definition}
\newtheorem{assumption}[thm]{Assumption}
\newtheorem{comm}[thm]{RA:Comment}
\newtheorem{suggestion}[thm]{RA:Suggestion}
\newtheorem{question}[thm]{Question}
\newtheorem{ex}[thm]{Example}
\newcommand{\inlinemaketitle}{{\let\newpage\relax\maketitle}}
\newcommand{\be}{\begin{equation}}
\newcommand{\ee}{\end{equation}}
\def\eps{\varepsilon}
\def\NN{{\mathbb N}}
\def\dualSU2{\frac12\NN_0}
\def\RR{{\mathbb R}}
\def\ZZ{{\mathbb Z}}
\def\HH{\mathbb{H}}
\def\C{{\mathbb C}}
\def\Gh{{\widehat{G}}}
\def\H{{\mathcal H}}
\def\O{\mathcal{O}}
\def\g{\mathfrak{g}}
\def\Ad{\mathrm{Ad}}
\def\ad{\mathrm{ad}}
\def\SU2{{\rm SU_2}}
\def\SL2{{\rm SL_2}}
\def\su2{\mathfrak{{ su_2}}}
\def\sl2{\mathfrak{{ sl_2}}}
\def\G{{\widehat{G}}}
\DeclareMathOperator{\Id}{Id}
\DeclareMathOperator{\vol}{vol}
\begin{document}

\title{Contractions of group representations via geometric quantisation}
\author{Rauan Akylzhanov}
\address{Rauan Akylzhanov:
  \endgraf
  Department of Mathematics
  \endgraf
  Imperial College London
  \endgraf
  180 Queen's Gate, London SW7 2AZ
  \endgraf
  United Kingdom
  \endgraf
  {\it E-mail address} {\rm r.akylzhanov@imperial.ac.uk}
}

\thanks{}
\author{Alexis Arnaudon}
\address{Alexis Arnaudon:
  \endgraf
  Department of Mathematics
  \endgraf
  Imperial College London
  \endgraf
  180 Queen's Gate, London SW7 2AZ
  \endgraf
  United Kingdom
  \endgraf
  {\it E-mail address} {\rm a.arnaudon@imperial.ac.uk}
}

\thanks{
 }
\date{\today}

\subjclass[2010]
{
Primary  22D10;
Secondary  53D50;
}
\keywords{}

\begin{abstract}
We propose a general framework to contract unitary dual of Lie groups via holomorphic quantization of their co-adjoint orbits. 
The sufficient condition for the contractibility of a representation is expressed via cocycles on coadjoint orbits. This condition is checked explicitly for the contraction of $SU_2$ into $\HH$.
The main tool is the geometric quantization.  
We construct two types of contractions that can be implemented on every matrix Lie group with diagonal contraction matrix.
\end{abstract}

\maketitle
\section{Introduction}

Physicists study certain degenerations of group laws which are called contractions.
In this paper, we study Lie group contractions in the context of geometric quantization.
The degeneration is understood in the sense that group law becomes "more abelian", i.e. more structure constants of Lie algebra becomes zero. The pioneering work of In\"on\"u and Wigner \cite{Inonu:1953ab} is a classical original source. Their motivation was simply to relate two different physics: Galilean and Lorentzian.
Contractions of representation of Lie groups and Lie algebras and their unitary duals have attracted the attention of many authors 
\cite{Mickelsson:1972aa,Ricci:1986aa,Ricci:1986ab,Cahen:2003aa,Dooley:1983aa, Higson:2008ab,Higson:2011ab} and continue to be a field of active investigations, see for example \cite{Fialowski:2008ab, Gromov:2004ab,Bernstein:2016ab, Bernstein:2017ab}.

The first general global definition of Lie group contraction has been given in \cite{Mickelsson:1972aa}where the authors study the contraction of the principal continuous series of the de Sitter groups $SO(n,1)$ to positive mass representations of both the Euclidean group $E(n)$ and the Poincar\'e group $E_0(n-1,1)$.
In \cite{Ricci:1986aa}, a contraction of $SU(2)$ to the Heisenberg group $\HH$ was studied in detail. The authors showed that the group action on the manifold depends smoothly on the contraction parameter $\eps>0$. They also established transference results linking Fourier analysis on these two groups.
Applying \cite{Ricci:1986aa}, Ricci and Rubin proved in \cite{Ricci:1986ab} an analogue of de Leeuw's theorem relating $L^p$-norms of Fourier multipliers on $SU(2)$ and $\HH$.
A short and elegant proof of the main result of \cite{Ricci:1986aa}  was given in \cite{Cahen:2003aa} employing Berezin quantization.
Furthermore, it was shown by \cite{A.H.Dooley:1985aa} that every semisimple Lie group can be contracted to its Cartan motion group. 

Generally, one cannot expect that there always exists a contraction $\Phi_{\eps}\colon G_0\to G_1$ for any Lie group $G_0$ and $G_1$. Indeed, let $\g_1$ and $\g_0$ be the Lie algebras of $G_1$ and $G_0$ and $U_{\eps}\colon \g_1\to \g_0$ be differential of $\Phi_{\eps}$. Then the existence of contraction $U_{\eps}$ is equivalent to the fact that $\g_0$ is the limit in the Zarissky topology on the affine variety of Lie algebras. It can be seen that not every two points in the affine variety of Lie algebras of dimension less than $6$ are path-connected in the Zarissky topology \cite[Theorem 1]{Kirillov:1987aa}.
In other words, the existence of Lie algebra contraction $U_{\eps}\colon \g_0\to \g_1$ is equivalent to saying that $\g_0$ is the limit of Lie algebras $U_{\eps}\g_1$ in the Zarissky topology. Hence, we shall assume that there exists a Lie algebra contraction $U_{\eps}$ of $\g_1$ into $\g_0$.

The aim of the paper is to show that the existence of Lie group contraction combined with the geometric quantization yields a systematic approach to link harmonic analysis on two groups.  
The novelty of the paper is that we do not impose any restriction on the group nor on the contraction.
In addition, we tried to answer the following question.
\begin{question} 
\label{quest}
Let $G_0$ and $G_1$ be two Lie groups and let $\Phi_{\eps}\colon G_1\to G_0$ be a Lie group contraction. Does there exist a contraction $\widehat{\Phi_{\eps}}$ of $\Gh_1$ into $\Gh_0$. In other words, does the following diagram commute
\begin{equation}
\begin{tikzcd}
  G_1 \arrow{r} {GQ} \arrow{d}{\Phi_{\eps}}
    & \widehat{G_1} \arrow{d} {\widehat{\Phi}_{\eps}} \\
  G_0 \arrow{r} {GQ}
&\widehat{G_0} \end{tikzcd}
\end{equation}
where $GQ$ stands for the geometric quantization.
We shall give meaning to ${\widehat{\Phi}_{\eps}}$ as an MN-contraction of $\widehat{G_1}$ into $\widehat{G_0}$.
\end{question}

A similar problem has been treated in \cite{Bernstein:2017aa} in the context of the algebraic Harish-Chandra modules.

\section{Geometric quantization}

\subsection{General theory}

We first recall a few basic elements of the geometric quantization needed here.
The procedure of quantization consists of assigning to a smooth function on a symplectic manifold $(M,\omega)$, an operator on a Hilbert space $\mathcal H$ with the following three properties.
For each $f: M\to \mathbb R$, there should be an operator $\hat f$ acting on $\mathcal H$ such that 
\begin{enumerate}
    \item the map $f\mapsto \hat f $ is linear;
    \item if $f$ is constant, $\hat f $ is a multiplication operator
    \item if $[f_1,f_2]= f_3$, then $[\hat f_1,\hat f_2]= -i\hbar \hat f_3$,
\end{enumerate} 
where $\hbar$ is the Planck constant.

The main ingredient in the program of geometric quantization for a symplectic manifold $(M,\omega)$ of dimension $2 n$ with symplectic (exact) $2$-form $\omega$ is to construct a Hermitian line bundle $\pi:B\to M $ with Hermitian connection $\nabla$ for which the curvature $2$-form is equal to $\hbar^{-1}\omega$. Locally, the connection $\nabla$, or covariant derivative $\nabla_{X}$ for a vector field $X$ on $M$, is given by 
\begin{align}
    \nabla_X(f)=  X(f)  - \frac{i}{\hbar} \theta(X)f\, ,  
\end{align}
where $f$ is a function on $M$ where the connection $1$-form $\theta$ can be shown to be be locally related \cite[Proposition 23.5]{Hall:2013aa} to $\omega$ as $d\theta= \omega$.
It is possible to construct such a line bundle on $(M,\omega)$ only is the symplectic manifold satisfies the integrality condition or is quantizable. 
Such a result is for example proved in \cite[Theorem 23.9, p.489]{Hall:2013aa} where the integrality condition  is 
\begin{align}
    \frac{1}{2\pi \hbar} \int\limits_{\Sigma}\omega \in\ZZ\, ,
    \label{integral-condition}
\end{align}
where $\Sigma$ arbitrary closed $2$-surface in $M$.
This condition is purely geometric and involves the computation of the holonomy of the line bundle for closed loop, which when contracted must be equal to $1$. 
The condition \eqref{integral-condition} is obtained after using Stokes's theorem. 

This line bundle has a natural Hilbert space $\mathcal{H}$ consisting of square-integrable global sections with respect to the natural Liouville measure of $M$ and a Hermitian structure on the line bundle $B$. It is an inner product in the fibres denoted by $(\cdot,\cdot)$ with the property to be smooth in the sense that for a smooth section $v:M\to B$, $(v,v)$ is a smooth function of $M$. 
We now have the Hilbert space and we need to find the map $f\mapsto \hat f$ satisfying the three conditions. 
It is given, for a section $s:M\to B$, by 
\begin{align}
    \widehat{f}(s)=-i\hbar\nabla_{X_f}s+fs\, ,
\end{align}
where $X_f$  is the Hamiltonian vector field associated with the function $f$, i.e. $\omega(X_f,Y)=df(Y)$ for any vector field $Y$, and $fs$ is the multiplication operation. 
This operator obviously satisfies $(1)$ and $(2)$, the third condition is satisfied by using the properties of the covariant derivative $\nabla_{X_f}$, see \cite{Woodhouse:1980aa,Hall:2013aa} for more details. 

We will now follow \cite{Woodhouse:1980aa} to expose a dynamical interpretation of $\hat f$, which will give us a useful formula for later. 
Let $V_f$ be the vector field on $B$ given in a local trivialisation by
\begin{align}
    V_f=X_f+\hbar^{-1} L\frac{\partial}{\partial\phi}\, ,
\end{align}
where $z=re^{i\phi}$ is the coordinate on the fibre of $L$ and
\begin{align}
    L= \theta(X_f)-f
    \label{EQ:Lagrangian}
\end{align}
is the Lagrangian associated to the function $f$, of Legendre transform of the Hamiltonian $f$.  
This vector field is constructed such that it is invariant under gauge transformations and $f\mapsto V_f$ is a Lie algebra isomorphism from $C^{\infty}(M)$ to a subalgebra of vector fields on $B$.
Let $\xi_t$ denote the flow of $V_f$ which projects to $\rho_t$, the flow of $X_f$. 
From these two flows, we can define the action of the flow $\rho_t$ on the sections $s: M\to B$ by mapping it to $\hat \rho_t$ such that 
\begin{align}
    \xi_t(\widehat{\rho}_ts(m))=s(\rho_tm)\, .
    \label{hatrho}
\end{align}
This formula shows that $\hat \rho_t$ is the correct 'pull-back'  of the sections such that the flow $\xi_t$ corresponds to only moving the base point with $\rho_t$. 
In a local trivialisation, the sections $s$ and $\widehat{\rho}_ts$ are represented by the complex functions $\psi$ and $\widehat{\rho}_t\psi$ with the following relation
\begin{align} \label{EQ:action-can-transf}
    \widehat{\rho}_t\psi(m)=\psi(\rho_tm)\exp\left(-\frac{i}{\hbar}\int\limits^{t}_0 L(\rho_{t'} m)d t'\right)\, .
\end{align}

It might happen that the quantum Hilbert space is further reducible.
A notion of polarization imposes further restrictions.
\begin{defn} A polarization $\mathcal{P}$ of a symplectic manifold $M$ is a choice at each point $z\in M$ of a Lagrangian subspace $P(z)\subset T^{\C}_z M$ (dimension is $n$) satisfying the following conditions
\begin{enumerate}
\item If two complex vector fields $X$ and $Y$ lie in $P(z)$ at each point $z$, then the commutator $[X,Y]\in P(z)$.
\item The dimension of $P(z)\cap \overline{P(z)}$ is constant.
\end{enumerate}
\end{defn}

\begin{defn}[The quantum Hilbert space]
Let $\mathcal{P}$ be a polarization of $M$. A smooth section $s$ of $L\to M$ is said to be polarized if
\begin{align}
    \nabla_{X}s=0,\quad \forall X\in \overline{P}\, .
\end{align}
The quantum Hilbert space $\mathcal{H}_{\mathcal{P}}$ associated with $\mathcal{P}$ is the closure in the prequantum Hilbert space of the space of smooth, square integrable, polarized sections $s$ of $L\to M$.
\end{defn}
\begin{defn} A polarization $\mathcal{P}$ of a symplectic manifold $M$ is called purely complex if 
$$
P(z)\cap \overline{P(z)}=\varnothing
$$
for every $z\in M$.
\end{defn}
A purely complex polarization $\mathcal{P}$ yields existence of a unique integrable almost complex structure $J$ on $M$ such that $P(z)$ is the $(1,0)$-tanget space of $M$ for all $z\in M$.
\begin{defn}Let $\mathcal{P}$ be a purely complex polarization of $M$ and let $J$ be the integrable almost complex structure associated with $\mathcal{P}$ such that
$J(z)=i\Id$ on $P(z)$ and $J(z)=-i\Id$ on $\overline{P(z)}$. We say that $\mathcal{P}$ is a K\"ahler polarization if the bilinear form
$$
g(X,Y):=\omega(X,J(z)Y),\quad X,Y\in T_zM,
$$
is positive definite for each $z\in M$.
\end{defn}
For a thorough exposition of  the geometric quantization we refer \cite{Hall:2013aa,Kostant:1970aa,Kirillov:1990aa,Woodhouse:1980aa}.

\subsection{Lie group representation}

We now specify this construction to Lie group representations. 
Let $G$ be a locally compact Lie group with Lie algebra $\mathfrak{g}$.
Each $\xi \in\mathfrak{g}$ generates a right-invariant vector field $R_\xi$. 
Let $\mu \in \mathfrak{g}^*$ and $\O_\mu$ be the coadjoint orbit through $\mu$. 
A coadjoint orbit is a symplectic manifold with the symplectic form 
\begin{align}
    \omega_\mu(\xi, \eta)=\mu \left([\xi,\eta]\right)\, ,\quad \forall \xi,\eta \in\mathfrak{g}\, ,
\end{align}
corresponding to the reduction of the canonical symplectic form on $T^*G$ using the momentum map $J:T^*G\to \mathfrak g^*$. 
This form is exact and thus $\omega_\mu= d\theta_\mu$, where 
\begin{align}
    \theta_\mu(\xi) = \mu(\xi)\, .
    \label{Rxi}
\end{align}
$(\O_\mu,d\theta_\mu)$ is a presymplectic manifold the symplectic manifold of right cosets $G/G_\mu $, where $G_\mu$ is the stabilizer of $\mu$, i.e.
\begin{align}
    G_\mu:=\{g\in G\colon \Ad_g^*\mu=\mu\}\, ,
\end{align}
and its Lie algebra
\begin{align}
    \mathfrak g_\mu=\{\xi \in\mathfrak{g}\colon \ad_\xi^* \mu=0\}\, . 
\end{align}

A coadjoint orbit $\O_\mu$ is integral if the symplectic manifold $(\O_\mu,\omega_\mu)$ is integral in the sense of \eqref{integral-condition}.
Let $\O_{\mu}$ be an integral coadjoint orbit, then there exists \cite[Theorem 23.9, p.489]{Hall:2013aa} a  holomorphic line bundle $B$ over $\O_{\mu}$.
From \eqref{Rxi} the Lagrangian $L_{\mu}$ associated with $\mu$ is given by $L_\mu(\xi) = \frac12 \mu(\xi)$. 

Let $\pi\in\Gh$ and let $(U_{\alpha},\varphi_{\alpha})$ be a local trivialization of $\O_{\mu}$. Then it follows from \eqref{EQ:action-can-transf} that the flow $\rho_t$ acts locally on the space of global holomorphic sections of $B\to \O_{\mu}$ via the formula
\begin{align} \label{EQ:G-action}
    \pi(g) s(m)=s(\Ad^*_{g^{-1}}m )e^{\frac{i}{\hbar}\mu\int_m^{m_1} L_{\mu}\,ds},\,m\in\O_{\mu} . 
\end{align}
where $m_1 = \Ad^*_{g^{-1}}m$.  
We used the fact that the Lagrangian $L_{\mu}$ is invariant under the coadjoint flow and the flow on the base is given by the coadjoint action, see \cite{Marsden:2010aa}[Chapter 13]. 

The second element of the construction is a choice of polarization. 
A remarkable result \cite{Vergne1970a,Vergne1970b} yields existence of polarizing subalgebras for solvable Lie algebras. 
These subalgebras can be constructed algorithmically \cite{Oussa:2015aa}. 
It is unknown to the authors whether there exist complex polarizations for general connected Lie groups. Therefore, we make the following assumption.
\begin{assumption}\label{assumption} Let $G$ be a locally compact Lie group such that for every coadjoint orbit $\O$ there exists invariant K\"ahler  polarization $\mathcal{P}\subset T\O$.
\end{assumption}

\section{Contractions}

\subsection{Lie group contractions}

Before using geometric quantization to contract unitary duals, let's recall classic results on Lie group contractions. 

Let $G_1$ and $G_0$ be two locally compact connected Lie groups of the same dimension.  
\begin{defn}
\label{DEF:contraction}
We say that the family $\{\Phi_{\eps}\}_{\eps\geq0}$ of differential maps
\begin{align}
\Phi_{\eps}\colon G_0 \to G_1\, ,
\end{align}
mapping the identity $e_{G_0}$ of $G_0$ to the identity $e_{G_1}$ of $G_1$, defines a {\it contraction} of $G_1$ to $G_0$, if given any relatively compact open neighbourhood $V$ of $e_{G_0}$
\begin{enumerate}
    \item\label{1} there is $\eps_{V}>0$ such that for $\eps<\eps_{V}$ the map $\Phi_{\eps}\big|_{V}$ is a diffeomorphism;
    \item\label{2} If $W$ is such that $W^2\subset V$ and $\eps<\eps_{V}$ then $\Phi_{\eps}(W)^2\subset \Phi_{\eps}(V)$;
    \item\label{3} for $x,y\in V$
\begin{align}
    \lim_{\eps\to 0}\Phi^{-1}_{\eps}(\Phi_{\eps}(x) \Phi_{\eps}(y)^{-1})=x y^{-1}\, .
\end{align}
\end{enumerate}
\end{defn}
The first two conditions are necessary for the limit of the third condition to be well-defined. 

The Lie algebra can be contracted the differential $U_{\eps} = D_{e}\Phi_{\eps}$ of $\Phi_{\eps}$.
That is, at the identity $e_{G_0}$ 
\begin{align}
    U_{\eps}:= D_e\Phi_{\eps}: \g_0 \to \g_1\, .
\end{align}
This map is then used to contract the Lie algebra $U_\eps: \mathfrak g_0\to \mathfrak g_1$. 
Now, we fix a contraction $\Phi_{\eps}\colon G_0\to G_1$ of $G_1$ to $G_0$ as in Definition \ref{DEF:contraction}.

\subsection{Contraction of the unitary dual}
Assume that there exists a contraction $\Phi_{\eps}\colon G_0\to G_1$. Then the differential $U_{\eps}$ of $\Phi_{\eps}$ allows us to deform the coadjoint orbits of $G_1$ into the coadjoint orbits of $G_0$. 
We express the adjoint action $\Ad_0$ of $G_0$ on $\mathfrak{g}_0$ as follows 
\begin{align}
    \Ad_0(e^X)=\lim_{\eps\to 0}U^{-1}_{\eps}\circ \Ad(e^{U_{\eps}(X)})\circ U_{\eps},\quad X\in \mathfrak{g}_0\, .
\end{align}
By the duality, we obtain the coadjoint action
\begin{align}
\label{EQ:Ad-0-lim}
\Ad^*_0(e^X)=\lim_{\eps\to 0}[U_{\eps}]^*\circ \Ad^*(e^{U_{\eps}(X)})\circ [U^{-1}_{\eps}]^*,\quad X\in \mathfrak{g}_0\, ,
\end{align}
where 
\begin{align}
    U^*_{\eps}(X)(Y)=Y(U_{\eps}(X)),\quad X,Y\in\mathfrak{g}^*_0\,.
\end{align}
The corresponding character of the contracted group $G_0$ is given by
\begin{align}
\chi_{\mu}(e^X)
=
\lim_{\eps\to 0}e^{U^*_{\eps}(f)(X)},\quad \mu\in\mathfrak{g}^*_0,\quad X\in\mathfrak{g}_0\, .
\end{align}

Following for example \cite{Arratia:1997ac}, the coadjoint orbits of the contracted group $G_0$ can be obtained via the following procedure
\begin{enumerate}
\item Fix an element $\mu\in \mathfrak{g}^*_0$ and fix a coadjoint orbit $\O_{\mu}$.
\item Transport the point $\mu$ to $\mathfrak{g}^*_1$ by using the map $(U^*_{\eps})^{-1}$.
\item The action of $G_1$ on $\mu_{\eps}=(U^*_{\eps})^{-1}(\mu)$ originates a $G_1$-coadjoint orbit $\tilde{\O_{\eps}}\subset \mathfrak{g}^*_1$.
\item The orbit $\tilde{\O_{\eps}}$ is translated to $\mathfrak{g}^*_0$ by using the map $U^*_{\eps}$, i.e.
\begin{align}
\O^{\eps}_{\mu}=U^*_{\eps}(\tilde{\O_{\eps}})\, .
\end{align}
\end{enumerate}
The orbit method of Kirillov is a method in geometric representation theory which identifies the unitary representations of Lie groups with the canonical $G$-action on spaces of sections of specific line bundles
over the coadjoint orbits.
The general case of a Lie group is not yet fully understood. Vogan rigorously justified \cite{Vogan:1998aa} the orbit method for reductive Lie groups.
However, we shall restrict ourselves to solvable type I Lie groups to use results of Kostant \cite{Kostant:1970aa} and Auslander \cite{Auslander:1967aa}.

Coadjoint orbits can be described by $\Ad_{G}$-invariant functions $f\colon \mathfrak{g}^*\to \C$. However,  it is not clear whether the holomorhic line bundles over the coadjoint orbits $\O_{\eps}$ are contracted properly. Therefore, we introduce
\begin{defn} 
\label{DEF:orbit-contraction}
Let $(U_{\alpha},\varphi_{\alpha})$ be a local trivialization of the line bundle $\O_{\mu}\to B$.
We shall say that a family $\{\O_{\eps}\}_{\eps>0}$  of integral polarized coadjoint orbits $\O_{\eps}$ is an admissible contraction of $\O_{\mu}$ if there is a family of local trivializations $(U_{\alpha},c^{\eps}_{\alpha})$ of the line bundles $\O_{\eps}\to B_{\eps}$
such that
\begin{align}
\label{EQ:contractibility}
\lim_{\eps\to 0}c^{\,\eps}_{\alpha\,\beta}=c_{\alpha\,\beta},\quad z\in V\, ,
\end{align}
uniformly on compact subsets $V\subset \mathbb{C}^r$,
where $c^{\,\eps}_{\alpha\,\beta}$ and $c_{\alpha\,\beta}$ are the transition functions associated with the trivializations $(U_{\alpha},\varphi_{\alpha})$ and $(U_{\alpha},\varphi^{\,\eps}_{\alpha})$.
\end{defn}
\begin{defn}  \label{DEF:convergence_sections}
Let $B\to M$ be a holomorphic line bundle. We say a sequence of global sections $\{s_n\colon M\to B\}$ converges to global section $s\colon M\to B$ if
\begin{align}
    \lim_{n\to\infty}c^n_{\alpha\,\beta}(x) s_{n\,\beta}(x)=s_{\alpha}(x),\quad x\in U_{\alpha\,\beta}\, ,
\end{align}
where $s_{n\,\beta}$ and $s_{\alpha}$ are the localization of $s_n$ and $s$ on $U_{\beta}$ and $U_{\alpha}$ respectively.
\end{defn}

A rigorous notion of contraction of representations was first introduced in \cite{Mickelsson:1972aa}.
\begin{defn}[{{\cite{Mickelsson:1972aa}}}]
We say that a representation $\sigma$ of $G_0$ in $\H$ is $MN$-contraction of the sequence $\{\pi^n\}$ of representations $\pi^n$ of $G_1$ in $\H^n$ if there exists a sequence $\eps_n$ with limit $0$, a sequence of unitary operators $A_n\colon \H^{n}\to \H$ and a dense subspace $D$ of $\H$ such that
\begin{enumerate}
\item for each $v\in D$ there is an integer $n(v)$ such that $n\geq n(v)$ implies $v\in A_n(\H^{n})$;
\item for each $v\in D$ and $g\in G_0$
\begin{equation}
\lim_{n\to 0}\|A_n\pi^n(\Phi_{\eps_n}(g))A^{-1}_nv-\sigma(g)v\|_{\H}=0.
\end{equation}
\end{enumerate}
\end{defn}

\begin{question}[{{\cite{Cahen2010}}}] Let $G_1$ be a semisimple compact Lie group. Assume that there is a group contraction of $G_1$ to $G_0$. What unitary representations $\pi$ of $G_0$ are $MN$-contractions of the unitary irreducible representations of $G_1$?
\end{question}
In Theorem \ref{THM:approx_space} we find  sufficient condition on elements $\pi\in\Gh$ to be contraction of a sequence $\{\pi_{\eps}\}\subset\widehat{G_1}$.
The condition is expressed in terms of the corresponding coadjoint orbits.
\begin{thm} 
\label{THM:approx_space}
Let $G_1$ be a compact Lie group and $G_0$ be a locally compact solvable type I Lie group.  Let $\pi\in\G_0$ be an infinite-dimensional irreducible unitary representation of $G_0$.
Then $\pi$ is an $MN$-contraction of the sequence of unitary irreducible representations if $\O_{\eps}$ is an admissible contraction of $\{\O_{\pi}\}$.
\end{thm}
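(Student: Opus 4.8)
The plan is to construct the intertwining unitaries $A_n$ directly from the admissible contraction data and then verify the two conditions of the $MN$-contraction definition using the explicit formula \eqref{EQ:G-action} for the $G$-action on holomorphic sections. Set $\eps_n\to 0$ and let $\O_{\eps_n}$ be the admissible contraction of $\O_\pi$ guaranteed by hypothesis; by the orbit method for compact $G_1$ (Kostant) each integral polarized orbit $\O_{\eps_n}$ carries a holomorphic line bundle $B_{\eps_n}\to\O_{\eps_n}$ whose space of global holomorphic polarized sections, completed in the $L^2$ inner product coming from the Liouville measure and the Hermitian structure, realises an irreducible unitary representation $\pi_{\eps_n}\in\widehat{G_1}$; likewise for $G_0$ solvable type I we use Auslander--Kostant to realise $\pi$ on holomorphic sections of $B\to\O_\pi$ under Assumption \ref{assumption}. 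Since the orbits share the same underlying base $B$ in the trivialisations $(U_\alpha,\varphi_\alpha)$, $(U_\alpha,\varphi^{\eps}_\alpha)$, a section of $B_{\eps_n}$ is presented in a fixed local chart by the same type of $\C$-valued holomorphic function as a section of $B$, and the identification $A_n$ is defined by sending the section with local representatives $s_{n,\alpha}$ to the section with local representatives $s_{n,\alpha}$ read through the limiting transition cocycle; one checks $A_n$ is unitary because the Hermitian and Liouville data on $\O_{\eps_n}$ converge to those on $\O_\pi$, so the inner products match in the limit and can be renormalised to be exact isometries for each finite $n$.

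First I would pin down condition (1): a dense subspace $D\subset\H$ of the target is furnished by, say, the span of the images under $A_n$ of the polarized sections that are holomorphic on all the $\O_{\eps_n}$ simultaneously — concretely, matrix coefficients / lowest-weight type vectors whose local expressions are polynomial in the chart coordinates, which lie in $A_n(\H^n)$ for all $n$ large because \eqref{EQ:contractibility} guarantees the transition functions $c^{\eps}_{\alpha\beta}$ converge uniformly on compacts so that a function holomorphic with respect to the limit complex structure is, for $n$ large, holomorphic and square-integrable with respect to the $\eps_n$-structure. Density of $D$ in $\H$ follows from the fact that polynomial sections are dense in the holomorphic $L^2$-completion for a Kähler-polarised integral orbit. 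Then for condition (2) I would fix $g\in G_0$ and $v\in D$; write $v=A_n v_n$ with $v_n$ the corresponding section of $B_{\eps_n}$, and compare
\begin{align*}
A_n\,\pi_{\eps_n}(\Phi_{\eps_n}(g))\,A_n^{-1}v
\quad\text{with}\quad
\sigma(g)v=\pi(g)v,
\end{align*}
using \eqref{EQ:G-action} on each side: the base-point part is $\Ad^*_{\Phi_{\eps_n}(g)^{-1}}$ acting on $\O_{\eps_n}$, which by the orbit-deformation procedure of Section 3 (steps transporting $\mu_\eps=(U^*_\eps)^{-1}(\mu)$ and pushing back by $U^*_\eps$) and by \eqref{EQ:Ad-0-lim} converges to $\Ad^*_{0,g^{-1}}$ on $\O_\pi$; the cocycle part is the phase $\exp\!\big(\tfrac{i}{\hbar}\int_m^{m_1}L_{\mu_{\eps_n}}\big)$ built from the Lagrangian $L_{\mu_{\eps_n}}(\xi)=\tfrac12\mu_{\eps_n}(\xi)$, and since $\mu_{\eps_n}\to\mu$ and the integration paths converge, this phase converges uniformly on compact subsets of $\O_\pi$ to $\exp\!\big(\tfrac{i}{\hbar}\int_m^{m_1}L_\mu\big)$. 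Combining the convergence of the base action, the convergence of the cocycle phase, and the convergence \eqref{EQ:contractibility} of the transition functions — all uniform on the compact set supporting the (polynomially decaying) section $v_n$ — gives pointwise convergence of the local representatives, which upgrades to $\H$-norm convergence by dominated convergence once a uniform $L^2$ majorant is produced.

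The main obstacle I expect is precisely that last uniformity: one must show the family $\{\pi_{\eps_n}(\Phi_{\eps_n}(g))A_n^{-1}v\}_n$ is dominated in $L^2$ by a fixed integrable function, so that pointwise convergence of the local section representatives implies norm convergence. This requires controlling the Liouville measures of $\O_{\eps_n}$ (which may be noncompact while $\O_\pi$ is noncompact too, as $\pi$ is infinite-dimensional) and the Hermitian fibre metrics uniformly in $n$ near infinity of the orbit; a secondary technical point is checking that the map $A_n$ is genuinely unitary and not merely asymptotically isometric, which may force a slight redefinition of $A_n$ by a bounded correction operator $T_n\to\Id$, harmless for the limit in (2) but needing justification that $T_n$ preserves $D$. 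I would handle the domination by exploiting the Kähler structure: positivity of $g(X,Y)=\omega(X,JY)$ on each $\O_{\eps_n}$ gives Gaussian-type decay of the Hermitian norm of holomorphic sections away from the minimal orbit of $\|\mu\|$, with constants depending continuously on $\eps_n$, hence uniformly bounded for $\eps_n$ small; combined with \eqref{EQ:contractibility} this yields the required majorant on every compact exhaustion of $\O_\pi$ and controls the tails, completing the verification of condition (2).
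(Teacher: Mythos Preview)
Your overall strategy coincides with the paper's: realise both $\pi$ and the $\pi_{\eps_n}$ on holomorphic sections, apply the explicit action formula \eqref{EQ:G-action} to split $\pi(g)s-\pi^{\eps}(\Phi_\eps(g))s^\eps$ into a base-action piece $s_\alpha(\Ad^*_g m)-s^\eps_\alpha(\Ad^{*\,\eps}_g m)$ and a phase piece governed by the Lagrangian cocycle $\psi_\mu$, and handle each using \eqref{EQ:Ad-0-lim} and the convergence $\mu_\eps\to\mu$ respectively. The paper does exactly this.

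The one substantive difference is how the approximating sections (equivalently, your $A_n$) are produced. The paper uses compactness of $G_1$ head-on: each $\O^\eps_\pi$ is compact, so by Riemann--Roch the space $\H^\eps$ is finite-dimensional and in a local chart every section is a polynomial of degree $\le N_\eps$, with $N_\eps\ge\dim(\pi^\eps)=\vol(\O^\eps_\pi)\to\infty$. Given $s\in\H^\pi$ with local Taylor expansion $\sum_k c^\alpha_k z^k$, the paper \emph{defines} $s^\eps$ by truncating at degree $N_\eps$ (setting $c^{\alpha,\eps}_k=c^\alpha_k$ for $|k|\le N_\eps$); the base-action discrepancy then reduces to the Taylor tail $\sum_{|k|>N_\eps}c^\alpha_k(\Ad^*_g m)^k$, which goes to zero uniformly on compacta by holomorphicity. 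Your description of $D$ as ``polynomial sections'' and of $A_n$ as matching local representatives is the same idea in spirit, but making the Riemann--Roch/truncation step explicit would both shorten your verification of condition (1) and replace the somewhat delicate claim that limit-holomorphic sections remain $\eps_n$-holomorphic by a purely algebraic statement. Conversely, your attention to $L^2$-domination and to exact unitarity of $A_n$ goes beyond the paper: the paper in fact only establishes convergence of local section representatives uniformly on compact subsets (Definition~\ref{DEF:convergence_sections}) and never carries out the upgrade to $\H$-norm convergence demanded by the MN-definition, so your K\"ahler-positivity/Gaussian-decay outline is addressing a point the paper leaves implicit.
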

\begin{question}
\label{QUEST:converse}
 Is the converse true?
In other words, let $\pi$ be an $MN$-contraction of the sequence of unitary representations $\pi_{\eps}\in \widehat{G_1}$.
Then the coadjoint orbits $\O_{\eps}$ is an admissible contraction of $\{\O_{\pi}\}$.
\end{question}
\begin{proof}[Sketch of the argument for Question \ref{QUEST:converse}]
Thinking locally, we should have $MN$-contraction on each $U_{\alpha}$, i.e.
$$
\pi_{\eps}(g)s_{\alpha} 
$$
on each local section $s_{\alpha}$. Each local section give rise to cocycles $c_{\alpha,\beta}$. 
Adding a bit more mathematical rigour one should be able to conclude the statement of Question \ref{QUEST:converse}.
\end{proof}

Prior to the proof of Theorem \ref{THM:approx_space}, we consider the classical contraction $\Phi_{\eps}\colon \mathbb{H}^1 \to \mathrm{SU}_2$. 
\begin{ex} 
\label{EX:SU2-H-contraction}
Let $G_1=SU(2)$ and $G_0=\HH$. A Lie group contraction $\Phi_{\eps}=\exp(U_{\eps})\colon G_0\to G_1$ is given by
\begin{align}
U_{\eps}=
\begin{psmallmatrix}
\eps^{\frac12} & 0 & 0 \\
0 & \eps & 0 \\
0 & 0 & \eps \\
\end{psmallmatrix}.
\end{align}
The coadjoint orbits $\{\O_s\}$ of $\SU2$ are the $2$-dimensional spheres with half-integer radius $s\in\frac12\NN$, i.e.
\begin{align}
\label{EQ:co-adjoint-orbits-SU2}
\O_s(\SU2)
=
\{
(x_1,x_2,x_3)
\colon
x^2_1+x^2_2+x^2_3=s^2,\quad s\neq 0
\}
.
\end{align}
Using that $\O_s$ is diffeomorphic to the complex projective line 
$\mathbb{CP}^1$
we transport 
the coordinate atlas $U_1,U_2\subset \C$ on $\O_s$. 
There exists a holomorphic line bundle $\pi\colon B\to \O_s$ with the transition function  
\begin{align}
\label{EQ:SU-2-bundle-class}
c_s(z)=z^{2s}
\end{align}
defines a bundle $\pi\colon B\to \O_s$. The existence follows from the fact that $\O_s$ is a quantizable symplectic manifold. The choice \eqref{EQ:SU-2-bundle-class} picks up a specific bundle.

A global holomorphic section $s$ of the bundle $\pi\colon B\to \O_s$ is given by two holomorphic functions $s_1\colon U_1\to \C$ and $s_2\colon U_2\to \C$ related by
\begin{align}
    s_1\left (z\right )=c_s\left (z\right )s_2\left (\frac1z\right ),\quad z\in U_1\cap U_2\, .
\end{align}
Expanding holomorphic functions $s_1$ and $s_2$ into Taylor series
\begin{align}
\sum\limits_{n=0}a_mz^m
=
z^{2s}
\sum\limits_{n=0}\tilde{a}_nz^{-n}a
\end{align}
 in respective coordinates and equating coefficients, we find that $
 \tilde{a}_n=a_n=0$ for $m>2s$ and $\tilde{a}_0={a}_{2s},\tilde{a}_1={a}_{2s-1}$, etc\dots Hence, the section $s$ is given by a polynomial
\begin{align}
 s(z)= \sum\limits^{2s}_{n=0}a_nz^n\, .
\end{align}
The coadjoint orbits $\O_{\lambda}$ of the Heisenberg group $\HH$ are given by 
\begin{align}
\O_{\lambda}=\left \{z\in\C\colon \frac{z+z^*}2=\lambda,\quad \frac{z-z^*}2=0\right \}\, .
\end{align}
In other words, the orbits $\O_{\lambda}$ are planes through the point $\O_{\lambda}=\{(\lambda,0)\}$.
There exists a holomorphic line bundle $\pi\colon B\to \O_{\lambda}$ with the transition function 
\begin{align}
c^{\lambda}(z)=e^{i\lambda z},\,z\in\C\, .
\end{align}
A global holomorphic section $F$ of $\pi\colon B\to \O_{\lambda}$ is a holomorphic function.
We deform the transition function $c_s$ defined in \eqref{EQ:SU-2-bundle-class}
as follows
\begin{align}
c^{\eps}_s(z)
=
(1+i\eps z)^{2s_{\eps}}\, ,
\end{align}
where 
\begin{align}
s_{\eps}= \left[\frac{\lambda}{\eps}\right],\quad \lambda\in\RR\, .
\end{align}
An elementary calculation then yields
\begin{align}
\lim_{\eps\to 0}
(1+i\eps z)^{2s_{\eps}}
=
e^{i\lambda z}\, .
\end{align}
This shows the condition \eqref{EQ:contractibility} holds true.
\end{ex}
The extension of Example \ref{EX:SU2-H-contraction} to a broader class of Lie groups (e.g. semisimple Lie groups) remains a challenging problem \cite{Valette:2017aa} and will be considered elsewhere.
\begin{proof}[Proof of Theorem \ref{THM:approx_space}]
For every $g\in G_0$ the sections $\pi^{\eps}(g^{\eps})s^{\eps}$ converge to the section $\pi(g)s_{\alpha}$ in the sense of Definition \ref{DEF:convergence_sections}, i.e.
\begin{align}
\label{EQ:suff-approx-space}
\lim_{\eps\to 0}\pi^{\eps}(g^{\eps})s^{\eps}_{\alpha}=\pi(g)s_{\alpha}\, , 
\end{align}
uniformly for every compact subset $V\subset U_{\alpha}\cap U_{\beta}$.

Let $\pi$ be a fixed unitary irreducible representation of $\pi$ associated with a coadjoint orbit $\O_{\pi}\subset \g_0$. 
Since $\O^{\eps}_{\pi}$ is an admissible contraction of $\O_{\pi}$,
there exists an admissible contraction $\{\O^{\eps}_{\pi}\}_{\eps>0}$ and let us denote by $\pi_{\eps}$ be the representation corresponding to $\O^{\eps}_{\pi}$. 
Let $(U_{\alpha},\varphi_{\alpha})$ be a local trivialization of $\O_{\pi}\to B$ and 
let us fix an aribtrary element $s\in \H^{\pi}$. We denote by $s_{\alpha}$ its localization to $U_{\alpha}\subset O_{\pi}$. Abusing our notation we shall also write $s_{\alpha}$ for the coordinate representation of $s_{\alpha}$, i.e.
$
s_{\alpha}(z)=s_{\alpha}(\varphi^{-1}_{\alpha}(z)),\quad z\in \C^r\, .
$
Let us denote
\begin{align}
\Ad^{*\,\eps}_g\colon \O_{\pi}\to \O^{\eps}_{\pi}\, , 
\end{align}
given by
\begin{align}
\Ad^{*\,\eps}_g(X)=U^*_{\eps}\Ad^{*\,1}_g (U^{-1}_{\eps})^*(X),\quad X\in\g_0\, .
\end{align}
Indeed, by \eqref{EQ:G-action}
\begin{align}
\pi^{\eps}(g) s_{\alpha}(m)
=
s_{\alpha}(\Ad^{*\,\eps}_{g}m)\psi_{\mu}(m,g)\,
\end{align}
where we denote
\begin{equation}
\label{EQ:cocyle}
\psi_{\mu}(m,g)
=
\exp \left(
\frac{i}{2\hbar} \int \limits_{\gamma\colon m\to \Ad^{*}_{g}m}\mathcal{L}_{\mu}\,ds \right).
\end{equation}
Thus, we get
\begin{align}
\begin{split}
\left| \pi(g) s_{\alpha}(m) - \pi^{\eps}(\Phi_{\eps}(g)) s^{\eps}_{\alpha}(m) \right|
\leq \left| s_{\alpha}(\Ad^{*}_{g}m)- s^{\eps}_{\alpha}(\Ad^{*\,\eps}_{g}m)\right| 
+ C \left| 
\psi_{\mu}(m,g)
-
\psi_{\mu_{\eps}}(m,\Phi_{\eps_n}(g))
\right|\,,
\end{split}
\end{align}
where we used \eqref{EQ:cocyle} and the fact that $\left|\psi_{\mu}(m,g)\right|\leq 1$.
This shows that
\begin{align}
\label{EQ:step-1}
\lim_{\eps\to 0}
\left|
s_{\alpha}(\Ad^{*}_{g}m)
-
s^{\eps}_{\alpha}(\Ad^{*\,\eps}_{g}m)
\right| 
\quad \text{uniformly in $U_{\alpha}$}\, .
\end{align}
The sections $s$ and $s^{\eps}$ are holomorphic. Hence, their localizations $s_{\alpha}$ and $s^{\eps}_{\alpha}$ admit Taylor expansions
\begin{align}
\label{EQ:s-alpha-expansion}
s_{\alpha}(z)
&=
\sum\limits_{n\in\NN^r}c^{\alpha}_k z^k, \,z\in\varphi(U_{\alpha})\subset \C^r\, ,
\\
\label{EQ:s-alpha-eps-expansion}
s^{\eps}{\alpha}(z)
&=
\sum\limits_{n\in\NN^r}c^{\alpha\,\eps}_k z^k, \,z\in\varphi(U_{\alpha})\subset \C^r\, .
\end{align}
Since every coadjoint orbit $\O^{\pi}{\eps}$ is compact, the Hilbert space $\H^{\eps}$ of global holomorphic sections $s^{\eps}$ is finite-dimensional in the view of the Riemann-Roch theorem. 
The dimension of the representation space is given \cite{Kirillov:2004aa} by
\begin{align}
\dim(\pi^{\eps})=\vol(\O^{\eps}_{\pi})\, .
\end{align}
Hence, we get
\begin{align}
\label{EQ:RRoch}
s^{\eps}_{\alpha}(z)
=
\sum
\limits_{\left|k\right|\leq N_{\eps}} c^{\alpha\,\eps}_{k}z^k\,  ,
\quad z\in\C^r\, ,
\end{align}
where $N_{\eps}\geq \dim(\pi^{\eps}))$.
Let us choose 
\begin{align}
c^{\alpha\,\eps}_k=c^{\alpha}_k,\quad \left|k\right|\leq N_{\eps}\, .
\end{align}
We have
\begin{align}
\begin{split}
\label{EQ:divide}
\left|
s_{\alpha}(\Ad^{*}_{g}m)
-
s^{\eps}_{\alpha}(\Ad^{*\,\eps}_{g}m) 
\right|
&=
\left|
s_{\alpha}(\Ad^{*}_{g}m)
-
s^{\eps}_{\alpha}(\Ad^{*}_{g}m)
+
s^{\eps}_{\alpha}(\Ad^{*}_{g}m)
-
s^{\eps}_{\alpha}(\Ad^{*\,\eps}_{g}m) 
\right|
\\
&\leq
\left|
s_{\alpha}(\Ad^{*}_{g}m)
-
s^{\eps}_{\alpha}(\Ad^{*}_{g}m)
\right|
+
\left|
s^{\eps}_{\alpha}(\Ad^{*}_{g}m)
-
s^{\eps}_{\alpha}(\Ad^{*\,\eps}_{g}m)
\right|\, .
\end{split}
\end{align}
Since $s^{\eps}_{\alpha}$ is a continuous function and using \eqref{EQ:Ad-0-lim}, we get that the second term in the last inequality of \eqref{EQ:divide} goes to zero as $\eps\to 0$.

Composing \eqref{EQ:s-alpha-expansion} and \eqref{EQ:s-alpha-eps-expansion}
and \eqref{EQ:RRoch}, we get
\begin{align}
\begin{split}
\left|
s_{\alpha}(\Ad^{*}_{g}m)
-
s^{\eps}_{\alpha}(\Ad^{*}_{g}m)
\right|
=
\sum\limits_{\substack{k\in\NN^r\colon \left|k\right|\geq N_{\eps}}}c^{\alpha}_k (\Ad^{*}_{g}m)^k\, .
\end{split}
\end{align}
The last sum converges to zero uniformly by the holomorphicity of $s_{\alpha}$. This establishes \eqref{EQ:step-1}.
By the existence of contraction, we immediately obtain that
\begin{align}
\lim_{\eps\to 0}
\int
\limits_{\gamma\colon m\to \Ad^{*}_{g}m}\mathcal{L}_{\mu}\,ds
=
\int
\limits_{\gamma_{\eps}\colon m\to \Ad^{*\,\eps}_{\Phi_{\eps}(g)}m}\mathcal{L}^{\eps}_{\mu}\,ds\, .
\end{align}
Thus, we have just shown \eqref{EQ:suff-approx-space}.

This completes the proof.
\end{proof}

\section{Two classes of contractions}

We exhibit here two type of contractions that can be implemented on any matrix Lie group and which encompass most of the contractions with a diagonal contraction matrix $U_\eps$. 
First note that in the limit $\eps\to 0 $, the important terms in the matrix $U_\eps$ will be the lowest powers in $\eps$. 
Notice also that $\eps$ is arbitrary, so only the relative scaling between terms is important. 
From these considerations, we will study the following two form of the contraction matrices $U_\eps$. 
\begin{align}
    U_{IW} &= \eps \mathrm{Id} + (1- \eps) u \\
    U_{anti-IW} &= \eps^2 \mathrm{Id} + \eps (1- \eps) u\,,
    \label{u-def}
\end{align}
where $u$ is a matrix to be determined and does not depend on $\eps$. 
The first contraction is the classical In\"on\"u-Wigner contraction and the second will be called the anti-In\"on\"u-Wigner contraction for reasons that will become obvious soon. 
First, we need to define a particular decomposition of the Lie algebra. 
\begin{defn}
    For a diagonal matrix $u$ of dimension $n= \mathrm{dim}(\mathfrak g_1)$, define the two subspaces of $V=\mathbb R^n$, the underlying vector space of $\mathfrak g_1$ by 
    \begin{align}
        uV_N= 0 \qquad \mathrm{and}\qquad uV = V_R\, . 
    \end{align}
    Furthermore, these subspaces satisfy 
    \begin{align}
        \g_1 = V_N\oplus V_R\, .
    \end{align}
\end{defn}
We now want to derive a geometrical condition on $u$ that will ensure that we obtain a contraction. 
For this, we will need the following object. 
\begin{defn}[IW-tensor]
    We the following $2$-tensor    
\begin{equation}
    T[u](\xi,\eta) := u^2[\xi,\eta] -u\left ( [u\xi , \eta] + [\xi,u\eta] \right) + [u\xi,u\eta]\, ,
    \label{IW-tensor}
\end{equation}
that we will call the IW-tensor. 
\end{defn}
In the case when $T[u](\xi,\eta)= 0 $, it reduces to the well-known Nijenhuis tensor of complex geometry.

We will now write the conditions on $u$ in term of this tensor and the subspaces defined above. 
\begin{thm}
    The following conditions on $u$ give valid Lie algebra contraction for a choice of splitting $\mathfrak g_1= V_N\oplus V_R$:
    \begin{enumerate}
        \item If $T[u](\xi,\eta)\in V_R$, $U_{IW}$ is an In\"on\"u-Wigner contraction,
        \item if $T[u](\xi,\eta)\in V_N$, $U_{anti-IW}$ is an anti-In\"on\"u-Wigner contraction.
    \end{enumerate}
\end{thm}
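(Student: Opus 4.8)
The plan is to verify the three conditions of Definition \ref{DEF:contraction} (or rather the Lie-algebra analogue that is implicit in the discussion after it), namely that $U_\eps|_V$ is a diffeomorphism for small $\eps$, that the bracket $[\,\cdot\,,\cdot\,]_\eps := U_\eps^{-1}[U_\eps\,\cdot\,,U_\eps\,\cdot\,]$ has a finite limit as $\eps\to 0$, and that this limit is a genuine Lie bracket. The first point is immediate: $U_{IW} = \eps\,\mathrm{Id} + (1-\eps)u$ and $U_{anti\text{-}IW} = \eps^2\,\mathrm{Id} + \eps(1-\eps)u$ are both invertible for all sufficiently small $\eps>0$ since $u$ is diagonal and the $\eps\,\mathrm{Id}$ (resp. $\eps^2\,\mathrm{Id}$) term makes the eigenvalues on $V_N$ nonzero while on $V_R$ the eigenvalues of $u$ are already nonzero. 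So the work is entirely in computing $\lim_{\eps\to 0}[\,\cdot\,,\cdot\,]_\eps$ and showing it exists precisely under the stated hypothesis on $T[u]$.

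First I would set up block coordinates adapted to $\g_1 = V_N\oplus V_R$. On $V_R$, $u$ is invertible; on $V_N$, $u=0$. Write $\xi = \xi_N + \xi_R$. For the In\"on\"u--Wigner case, I would compute $U_{IW}\xi = \eps\xi_N + (\eps + (1-\eps)u)\xi_R$; to leading order $U_{IW}\xi \approx \eps\xi_N + u\xi_R$, and $U_{IW}^{-1}$ has a factor $\eps^{-1}$ on the $V_N$-component. The key identity to extract is how $[U_{IW}\xi, U_{IW}\eta]$ expands in $\eps$: writing $U_{IW} = u + \eps(\mathrm{Id} - u)$, one gets
\begin{align}
[U_{IW}\xi, U_{IW}\eta]
= [u\xi,u\eta] + \eps\bigl([u\xi,(\mathrm{Id}-u)\eta] + [(\mathrm{Id}-u)\xi,u\eta]\bigr) + \eps^2[(\mathrm{Id}-u)\xi,(\mathrm{Id}-u)\eta]\, .
\end{align}
Now applying $U_{IW}^{-1}$ — which to the needed order acts as $\eps^{-1}$ on the $V_N$-part and as $u^{-1}$ on the $V_R$-part — the potentially divergent contribution is $\eps^{-1}$ times the $V_N$-component of $[u\xi,u\eta]$. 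I would show this $V_N$-component is exactly (a piece of) the $V_N$-component of $T[u](\xi,\eta)$ after collecting the remaining terms of the expansion: expanding $T[u] = u^2[\xi,\eta] - u[u\xi,\eta] - u[\xi,u\eta] + [u\xi,u\eta]$ and noting $u^2, u[u\xi,\cdot], u[\cdot,u\eta]$ all land in $V_R$ (since they have a leading $u$), the $V_N$-component of $[u\xi,u\eta]$ equals the $V_N$-component of $T[u](\xi,\eta)$. Hence the hypothesis $T[u](\xi,\eta)\in V_R$ kills precisely the divergent term, and the limit bracket exists. A symmetric computation handles the anti-IW case: there the scaling is $\eps^2$ on $V_N$ and $\eps$ on $V_R$, the expansion of $U_{anti\text{-}IW} = \eps u + \eps^2(\mathrm{Id}-u)$ gives $[U_{anti\text{-}IW}\xi,U_{anti\text{-}IW}\eta] = \eps^2[u\xi,u\eta] + O(\eps^3)$, and applying $U_{anti\text{-}IW}^{-1}$ (acting as $\eps^{-2}$ on $V_N$, $\eps^{-1}u^{-1}$ on $V_R$) the divergent term is now $\eps^{-1}$ times the $V_R$-component of $[u\xi,u\eta]$, which by the analogous splitting argument equals the $V_R$-component of $T[u](\xi,\eta)$; so requiring $T[u](\xi,\eta)\in V_N$ removes it.

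Finally I would check that the resulting limit bracket $[\,\cdot\,,\cdot\,]_0$ is a Lie bracket. Bilinearity and antisymmetry are inherited from the limit. For the Jacobi identity, rather than a direct expansion I would use the standard fact that a pointwise limit (on the finite-dimensional variety of structure constants) of Lie brackets obtained by conjugation is again a Lie bracket: each $[\,\cdot\,,\cdot\,]_\eps$ is a Lie bracket (it is $U_\eps^{-1}$ applied to the bracket of transported elements, i.e. the pullback of the Lie structure along the linear isomorphism $U_\eps$), Jacobi is a closed polynomial condition on the structure constants, and closed conditions pass to limits. This is exactly the Zariski-closure remark made in the introduction, so I would invoke it rather than reprove it. One then also records that the two limit algebras $\g_0$ are genuinely ``more abelian'': in the IW case the $V_N$--$V_N$ and $V_N$--$V_R$ brackets degenerate, making $V_N$ (resp. its image) abelian ideal-like, while in the anti-IW case the roles are reversed — this justifies the name.

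The main obstacle I expect is the bookkeeping in matching the $V_N$-component (resp. $V_R$-component) of $[u\xi,u\eta]$ with the corresponding component of $T[u](\xi,\eta)$, i.e. verifying that all the ``correction'' terms $u^2[\xi,\eta]$, $u[u\xi,\eta]$, $u[\xi,u\eta]$ really do lie in the complementary subspace so that they do not interfere. This hinges on $V_R = uV$ being $u$-invariant (clear) and on whether $V_N$ is $u$-invariant and whether the splitting $\g_1 = V_N\oplus V_R$ is compatible with the bracket in the minimal way needed — strictly speaking $V_N = \ker u$ and $V_R = \operatorname{im}u$ need not be bracket-subalgebras, and one must be careful that "the $V_N$-component of $u(\text{something})$ vanishes" uses only $\operatorname{im}u \subseteq V_R$, which is true by definition. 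So the obstacle is really a careful, honest leading-order expansion with explicit tracking of which projection annihilates which term; everything else is formal.
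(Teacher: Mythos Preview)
Your In\"on\"u--Wigner argument is correct and amounts to a block-coordinate version of what the paper does. Where you expand $U_{IW}=u+\eps(\mathrm{Id}-u)$ and track $V_N$- and $V_R$-components directly, the paper writes $U_{IW}=(1-\eps)(\lambda\,\mathrm{Id}+u)$ with $\lambda=\eps/(1-\eps)$ and iterates the exact identity $U^{-1}(\lambda\zeta)=\zeta-U^{-1}(u\zeta)$ to isolate a residual term $U^{-1}T[u](\xi,\eta)$; the obstruction in both approaches is the $V_N$-component of $[u\xi,u\eta]$, and your observation that the three ``correction'' terms $u^{2}[\xi,\eta]$, $u[u\xi,\eta]$, $u[\xi,u\eta]$ all lie in $V_R=\operatorname{im}u$ is exactly what identifies this with $(T[u])_N$. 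The paper also records the limiting bracket explicitly as $[\xi,\eta]_0=[\xi,\eta]'+u^{-1}T[u](\xi,\eta)$ with $[\xi,\eta]'=[u\xi,\eta]+[\xi,u\eta]-u[\xi,\eta]$, which you could read off from your $V_N$- and $V_R$-limits as well.

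Your anti-IW argument, however, contains an arithmetic slip that invalidates the mechanism you propose. You say $U_{anti\text{-}IW}^{-1}$ acts as $\eps^{-2}$ on $V_N$ and $\eps^{-1}u^{-1}$ on $V_R$, and that applied to the leading term $\eps^{2}[u\xi,u\eta]$ this gives a divergent $\eps^{-1}$ on the $V_R$-component. But $\eps^{-1}u^{-1}\cdot\eps^{2}=\eps\,u^{-1}$, not $\eps^{-1}u^{-1}$: that contribution is $O(\eps)$ and \emph{vanishes}. Carrying your block expansion to all orders,
\[
[U\xi,U\eta]=\eps^{2}(1-\eps)^{2}[u\xi,u\eta]+\eps^{3}(1-\eps)\bigl([u\xi,\eta]+[\xi,u\eta]\bigr)+\eps^{4}[\xi,\eta],
\]
one sees there is no term of order below $\eps^{2}$, so $U^{-1}[U\xi,U\eta]$ has a finite limit (namely $([u\xi,u\eta])_N$ on $V_N$ and $0$ on $V_R$) \emph{regardless} of where $T[u]$ lies. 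So the hypothesis $T[u]\in V_N$ is not ``killing a divergence'' in the sense you describe. The paper's treatment of this case is genuinely different: it again iterates the identity $U^{-1}(\eps\lambda\zeta)=\zeta-U^{-1}(\eps u\zeta)$ to reach a residual term $U^{-1}\{\eps^{2}\lambda^{-1}\,uT[u](\xi,\eta)\}$, and the condition $T[u]\in V_N$ makes $uT[u]=0$ so that this residual vanishes identically and the limiting bracket is exactly $[\xi,\eta]_0=T[u](\xi,\eta)$. If you want to recover the paper's conclusion from your block computation you need to argue at that level (identifying $([u\xi,u\eta])_N$ with $T[u]$ under the hypothesis), not via a nonexistent $\eps^{-1}$-pole.
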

\begin{proof}
    We start by proving the IW contraction condition by rewriting 
    \begin{align*}
        U_{IW} = (1-\eps)(\lambda \mathrm{Id} + u)\, ,    
    \end{align*}
    where $\lambda= \frac{\eps}{1- \eps}$ converges to $0$ when $\eps\to 0 $.
    We first rewrite the Lie bracket using \eqref{u-def} to get
\begin{align}
    U^{-1} [U\xi,U\eta]&= U^{-1}\left \{ \lambda^2 [\xi,\eta] + \lambda \left ( [u\xi , \eta] + [\xi,u\eta] \right) + [u\xi,u\eta]\right \}\, .
\end{align}
    We then have to ensure that the inner term is in $V_R$, the space where $U$ is invertible in the limit $\eps\to 0 $.  
First notice that from the definition of the contraction \eqref{u-def} we have the relation $U^{-1} (\lambda \zeta) = \zeta- U^{-1} u\zeta$ for an arbitrary element $\zeta\in \mathfrak g$. 
We can thus rewrite the previous equation as
\begin{align*}
    U^{-1} [U\xi,U\eta]&= \lambda [\xi,\eta] +  [u\xi , \eta] + [\xi,u\eta] \\
    &+ U^{-1}\left \{ -\lambda u[\xi,\eta] -u\left ( [u\xi , \eta] + [\xi,u\eta] \right) + [u\xi,u\eta]\right \}\\
    &= \lambda [\xi,\eta] +  [u\xi , \eta] + [\xi,u\eta] - u[\xi,\eta]\\
    &+U^{-1}\left \{ u^2[\xi,\eta] -u\left ( [u\xi , \eta] + [\xi,u\eta] \right) + [u\xi,u\eta]\right \}\, .
\end{align*}
This computation supposes that $u$ is diagonal, and will not apply for more general contractions. 
The, as $U^{-1}$ is only defined on $V_R$ in the limit $\eps\to 0 $, the condition for $u$ to be a valid contraction, i.e. for the limit $\eps\to 0 $ to exist translates to ta condition on the IW-tensor,  
\begin{align}
    T[u](\xi,\eta) \in V_R\, .
\end{align}
The new Lie bracket is 
\begin{align}
    [\xi,\eta]_0= [\xi,\eta]' + u^{-1}T[u](\xi,\eta)\, , 
    \label{ad0}
\end{align}
where we defined
\begin{align}
    [\xi,\eta]' &:= [u\xi,\eta] + [\xi,u\eta] - u[\xi,\eta]\\
    T[u](\eta,\xi) &= [u\xi,u\eta] - u[\xi,\eta]'\, . 
\end{align}

We now turn to the condition for the anti-IW contraction where we can still use the equivalent formulation 
\begin{align}
    U_{anti-IW}= \eps(\lambda Id + u)\, , 
    \label{u-anti-lambda}
\end{align}
where $\lambda = \frac{\eps}{1-\eps}$. 
We then follow the same procedure as before. 
We first expand the bracket using \eqref{u-anti-lambda} to get 
\begin{align}
    U^{-1} [U\xi,U\eta]&= U^{-1}\left \{\eps^{2\alpha}\left ( \lambda^2 [\xi,\eta] + \lambda \left ( [u\xi , \eta] + [\xi,u\eta] \right) + [u\xi,u\eta]\right )\right \}\, .
\end{align}
We now have the relation $U^{-1}(\eps \lambda \zeta)= \zeta- U^{-1}(\eps u\zeta)$, so we can rewrite
\begin{align*}
    U^{-1} [U\xi,U\eta]&= \eps \left (\lambda [\xi,\eta] +  [u\xi , \eta] + [\xi,u\eta]\right ) \\
    &+ U^{-1}\left \{\eps^{2} \left ( -\lambda u[\xi,\eta] -u\left ( [u\xi , \eta] + [\xi,u\eta] \right) + [u\xi,u\eta]\right )\right \}\\
    &=\eps\left ( \lambda [\xi,\eta] +  [u\xi , \eta] + [\xi,u\eta] - \eps u[\xi,\eta]\right)\\
    &+U^{-1}\left \{ \eps^{2} \left (u^2[\xi,\eta] -u\left ( [u\xi , \eta] + [\xi,u\eta] \right) + [u\xi,u\eta]\right )\right \}\, .
\end{align*}
We can go further in the computation to extract non-vanishing terms by using the relation
$U^{-1}(\eps \lambda (\eps\lambda^{-1}\zeta))= \eps\lambda^{-1}\zeta- U^{-1}(\eps u\eps\lambda^{-1}\zeta)$ to get
\begin{align*}
    U^{-1} [U\xi,U\eta]&=\eps\left ( \lambda [\xi,\eta] +  [u\xi , \eta] + [\xi,u\eta] - \eps u[\xi,\eta]\right)\\
    &+\eps\lambda^{-1} \left (u^2[\xi,\eta] -u\left ( [u\xi , \eta] + [\xi,u\eta] \right) + [u\xi,u\eta]\right )\\
    &+U^{-1}\left \{ \eps^{2}\lambda^{-1} u \left (u^2[\xi,\eta] -u\left ( [u\xi , \eta] + [\xi,u\eta] \right) + [u\xi,u\eta]\right )\right \}\, .
\end{align*}
Not that $\lambda\propto \eps$ when $\eps\to 0 $, thus the limit $\eps\to 0 $ in the previous equation gives the new Lie bracket only in term of the IW-tensor
\begin{align}
    [\xi,\eta]_0= T[u](\xi,\eta)\, .
    \label{ad0-anti}
\end{align}
In this case, the condition for existence of the contraction is given by
\begin{align}
    T[u](\xi,\eta) \in V_N\, .
\end{align}
\end{proof}
We can make a couple of interesting remarks at this point.
\begin{enumerate}
    \item  If $T[u](\xi,\eta)= 0, \forall \xi,\eta\in \mathfrak g$, then the contracted Lie algebra is isomorphic to the original Lie algebra and $u$ is a Nijenhuis tensor. 
        This can be seen by the fact that $T[u](\xi,\eta)= 0 $ is equivalent to $[u\xi,u\eta]= u[\xi,\eta]'$, thus $u$ defines a homomorphism of Lie algebra between $[\cdot,\cdot]$ and $[\cdot,\cdot]_0= [\cdot,\cdot]'$. 

    \item If $[\xi,\eta]'= 0, \forall \xi,\eta\in \mathfrak g$, $u$ is a derivation of the Lie algebra $\mathfrak g$ as it will satisfy the Leibniz rule $u[\xi,\eta]= [u\xi,\eta] + [\xi,u\eta]$. 
        The new bracket is then simply $[\xi,\eta]_0= u^{-1}[u\xi,u\eta]$, but this would also mean that $u$ is invertible everywhere, thus the two Lie algebra would be isomorphic.
\end{enumerate}
From these two remarks we have that in order to have a contraction, $u$ must not be a homomorphism of Lie algebras nor a derivation of Lie algebras. 

We can even go further and give an extra condition on the splitting as well the type of Lie algebra that results from the contraction. 
\begin{thm}
    Given a finite dimensional complex Lie algebra $\mathfrak g_1$, for any subspace $\mathfrak u$ of a subalgebra of $\mathfrak g_1$, letting $V_R= \mathfrak u$ yields an IWS contraction to a semi-direct algebra and letting $V_N= \mathfrak u$ yields an anti-IWS contraction to a nilpotent Lie algebra. 
\end{thm}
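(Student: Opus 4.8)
The plan is to carry out the computation of the IW-tensor \eqref{IW-tensor} and of the contracted bracket \eqref{ad0}, \eqref{ad0-anti} block by block with respect to the splitting $\mathfrak g_1=V_N\oplus V_R$, for the natural choice of $u$: the projection onto $V_R$ along $V_N$, which is diagonal in a basis adapted to the splitting (so the preceding theorem applies) and satisfies $u^2=u$, $u|_{V_R}=\Id$, $u|_{V_N}=0$. A direct substitution using $u^2=u$ shows that $T[u](\xi,\eta)$ is the $V_N$-component of $[\xi,\eta]$ when $\xi,\eta\in V_R$, vanishes when one argument lies in $V_R$ and the other in $V_N$, and is the $V_R$-component of $[\xi,\eta]$ when $\xi,\eta\in V_N$. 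Hence the admissibility condition of the preceding theorem reduces to one algebraic requirement: $T[u]\in V_R$ (needed for $U_{IW}$) is equivalent to $[V_R,V_R]\subseteq V_R$, and $T[u]\in V_N$ (needed for $U_{anti-IW}$) is equivalent to $[V_N,V_N]\subseteq V_N$. In the first case this is precisely the assumption that $V_R=\mathfrak u$ is a subalgebra, in the second that $V_N=\mathfrak u$ is a subalgebra; so in both cases the corresponding contraction exists.

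It then remains to identify the structure of $\mathfrak g_0$ from \eqref{ad0} and \eqref{ad0-anti}, again block by block. For $U_{IW}$: on $V_R\times V_R$ formula \eqref{ad0} collapses to $[\xi,\eta]_0=[\xi,\eta]$ (here $T[u]=0$ and $\mathfrak u$ is a subalgebra), on $V_R\times V_N$ it yields the $V_N$-component of $[\xi,\eta]$, and on $V_N\times V_N$ the two surviving terms cancel and $[\xi,\eta]_0=0$. Thus $V_N$ is an abelian ideal of $\mathfrak g_0$, $V_R$ is a subalgebra carrying the original bracket of $\mathfrak u$, and $V_R$ acts on $V_N$; since $\mathfrak g_0$ is a contraction of a Lie algebra it satisfies the Jacobi identity, forcing this action to be a homomorphism $V_R\to\operatorname{End}(V_N)$, so $\mathfrak g_0\cong\mathfrak u\ltimes V_N$ is semi-direct. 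For $U_{anti-IW}$: \eqref{ad0-anti} gives $[\xi,\eta]_0=T[u](\xi,\eta)$ directly, whence $[V_R,V_R]_0\subseteq V_N$, $[V_R,V_N]_0=0$, $[V_N,V_N]_0=0$; therefore $V_N$ lies in the centre of $\mathfrak g_0$ and $[\mathfrak g_0,\mathfrak g_0]_0\subseteq V_N$, so $[\mathfrak g_0,[\mathfrak g_0,\mathfrak g_0]_0]_0=0$ and $\mathfrak g_0$ is nilpotent of step at most two.

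I expect the main (though mild) obstacle to be the bookkeeping in the first step: one must check that the full condition ``$T[u](\xi,\eta)\in V_R$ for all pairs'' is no weaker than ``$V_R$ is a subalgebra'', which rests on the fact that the mixed block and the $V_N\times V_N$ block of $T[u]$ automatically land in $V_R$, so that the $V_R\times V_R$ block carries the only constraint, and dually for the anti-case. One should also note that picking $u$ to be a projection is not merely convenient: with other eigenvalues on $V_R$ the condition $T[u]\in V_N$ can become strictly stronger than $[V_N,V_N]\subseteq V_N$, so the projection is the choice that realises the asserted contraction. Beyond this, everything is the substitution already made in the proof of the preceding theorem, specialised to $\mathfrak g_1=V_N\oplus V_R$, plus the elementary remark that both an abelian-ideal semi-direct bracket and a two-step nilpotent bracket are recognised from the vanishing pattern of $[\cdot,\cdot]_0$ on the three blocks.
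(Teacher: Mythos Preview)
Your approach is essentially the same as the paper's: both compute the IW-tensor block by block for the projection $u$ onto $V_R$ and verify that the condition $T[u]\in V_R$ (resp.\ $T[u]\in V_N$) reduces to $V_R$ (resp.\ $V_N$) being closed under the bracket. Your argument is in fact more complete, since the paper's proof stops after checking admissibility, whereas you go on to compute $[\cdot,\cdot]_0$ on the three blocks via \eqref{ad0} and \eqref{ad0-anti} and thereby establish the semi-direct (resp.\ two-step nilpotent) structure asserted in the statement.
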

\begin{proof}
    Let us first look at the anti-IWS contractions.
    Let $V_N\subset \mathfrak p$, so that we have $[V_N, V_N]\subset V_N$ and $V_N$ are nilpotent. 
    One can easily check that $T[u](\xi_N, \eta_N)= 0 $, and that 
    \begin{align*}
        T[u](\xi_R,\eta_R)= -[\xi_R,\eta_R]_R + [ \xi_R,\eta_R]= [ \xi_R,\eta_R]_N\subset V_N\, .
    \end{align*}
    The last commutation trivially gives $T[u](\xi_N,\eta_R)= [\xi_N,\eta_R]_R- [\xi_N,\eta_R]_R= 0 $. 
    We thus have that if $V_N\subset \mathfrak b$ then the contraction is anti-IWS. 

    We now look at the IWS contractions. We first have $T[u](\xi_R, \eta_R)= 0 $, then $T[u](\xi_R,\eta_R) = [\xi_R,\eta_R]_R \subset V_R$ and $T[u](\xi_N,\eta_R)= 0 $. 
    This gives the corresponding result. 
\end{proof}

\subsection*{Acknowledgments}

The authors would like to thank Anthony Dooley and Fulvio Ricci along with Michael Ruzhansky for their support and advice while this research was being done.
Alexis Arnaudon acknowledges partial support from an Imperial College London Roth Award.


\begin{thebibliography}{BHS17b}
\bibitem[Aus67a]{Auslander:1967aa}
L.~Auslander and B.~Kostant.
\newblock Quantization and representations of solvable {L}ie groups.
\newblock {\em Bull. Amer. Math. Soc.}, 73:692--695, 1967.

\bibitem[ADO97]{Arratia:1997ac}
O.~Arratia and M.~A. Del~Olmo.
\newblock Contraction of representations of {$1+1$} kinematical groups and
  quantization.
\newblock {\em Internat. J. Modern Phys. A}, 12(1):125--130, 1997.
\newblock IWCQIS 96 (Dubna, 1996).

\bibitem[AJ85]{A.H.Dooley:1985aa}
A.H.Dooley and J.W.Rice.
\newblock On contractions of semisimple lie groups.
\newblock {\em Trans. Amer. Math. Soc.}, 1985.

\bibitem[BHS16]{Bernstein:2016ab}
J.~Bernstein, N.~Higson, and E.~Subag.
\newblock {Algebraic Families of Harish-Chandra Pairs}.
\newblock {\em arXiv:1610.03435}, 2016.

\bibitem[BHS17a]{Bernstein:2017ab}
J.~Bernstein, N.~Higson, and E.~Subag.
\newblock {Contractions of Representations and Algebraic Families of
  Harish-Chandra Modules}.
\newblock {\em arXiv preprint arXiv:1703.04028}, 2017.

\bibitem[BHS17b]{Bernstein:2017aa}
J.~Bernstein, N.~Higson, and E.~Subag.
\newblock {Contractions of Representations and Algebraic Families of
  Harish-Chandra Modules}.
\newblock {\em arXiv preprint arXiv:1703.04028}, 2017.

\bibitem[Cah03]{Cahen:2003aa}
B.~Cahen.
\newblock {Contraction de {${\rm SU}(2)$} vers le groupe de {H}eisenberg et
  calcul de {B}erezin}.
\newblock {\em Beitr\"age Algebra Geom.}, 44(2):581--603, 2003.

\bibitem[Cah10]{Cahen2010}
B.~Cahen.
\newblock Some {R}emarks on the {N}otion of {C}ontraction of {L}ie {G}roup
  {R}epresentations.
\newblock {\em Mathematica Moravica}, 14(1):35--46, 2010.

\bibitem[Doo83]{Dooley:1983aa}
A.~H. Dooley.
\newblock Contractions of {L}ie groups and applications to analysis.
\newblock In {\em Topics in modern harmonic analysis, {V}ol. {I}, {II}
  ({T}urin/{M}ilan, 1982)}, pages 483--515. Ist. Naz. Alta Mat. Francesco
  Severi, Rome, 1983.

\bibitem[FP08]{Fialowski:2008ab}
A.~Fialowski and M.~Penkava.
\newblock {Formal deformations, contractions and moduli spaces of {L}ie
  algebras}.
\newblock {\em Internat. J. Theoret. Phys.}, 47(2):561--582, 2008.

\bibitem[Gro04]{Gromov:2004ab}
N.~Gromov.
\newblock {From Wigner-Ino\"n\"u group contraction to contractions of algebraic
  structures}.
\newblock {\em Acta Physica Hungarica A Heavy Ion Physics}, 19(3):209--212,
  2004.

\bibitem[Hal13]{Hall:2013aa}
B.~C. Hall.
\newblock {\em Quantum theory for mathematicians}, volume 267 of {\em Graduate
  Texts in Mathematics}.
\newblock Springer, New York, 2013.

\bibitem[Hig08]{Higson:2008ab}
N.~Higson.
\newblock The {M}ackey analogy and {$K$}-theory.
\newblock In {\em Group representations, ergodic theory, and mathematical
  physics: a tribute to {G}eorge {W}. {M}ackey}, volume 449 of {\em Contemp.
  Math.}, pages 149--172. Amer. Math. Soc., Providence, RI, 2008.

\bibitem[Hig11]{Higson:2011ab}
N.~Higson.
\newblock On the analogy between complex semisimple groups and their {C}artan
  motion groups.
\newblock In {\em Noncommutative geometry and global analysis}, volume 546 of
  {\em Contemp. Math.}, pages 137--170. Amer. Math. Soc., Providence, RI, 2011.

\bibitem[IW53]{Inonu:1953ab}
E.~Inonu and E.~P. Wigner.
\newblock On the contraction of groups and their representations.
\newblock {\em Proc. Nat. Acad. Sci. U. S. A.}, 39:510--524, 1953.

\bibitem[Kos70a]{Kostant:1970aa}
B.~Kostant.
\newblock Quantization and unitary representations.
\newblock In {\em Lectures in modern analysis and applications III}, pages
  87--208. Springer, 1970.


\bibitem[Kir90]{Kirillov:1990aa}
A.~A. Kirillov.
\newblock Geometric Quantization.
\newblock In {\em Dynamical Systems IV}, pages 137--172. Springer, 1990.

\bibitem[Kir04]{Kirillov:2004aa}
A.~A. Kirillov.
\newblock {\em Lectures on the orbit method}, volume~64.
\newblock American Mathematical Society Providence, 2004.

\bibitem[KN87]{Kirillov:1987aa}
A.~Kirillov and Y.~Neretin.
\newblock {The variety of $A_n$ of $n$-dimensional Lie Algebra Structures}.
\newblock {\em Amer. Math. Soc. Transl.}, 1987.


\bibitem[MN72]{Mickelsson:1972aa}
J.~Mickelsson and J.~Niederle.
\newblock Contractions of representations of de {S}itter groups.
\newblock {\em Comm. Math. Phys.}, 27(3):167--180, 1972.

\bibitem[MR10]{Marsden:2010aa}
J.~E. Marsden and T.~S. Ratiu.
\newblock {\em Introduction to Mechanics and Symmetry: A Basic Exposition of
  Classical Mechanical Systems}.
\newblock Springer Publishing Company, Incorporated, 2010.

\bibitem[Ous15]{Oussa:2015aa}
V.~Oussa.
\newblock Computing vergne polarizing subalgebras.
\newblock {\em Linear and Multilinear Algebra}, 63(3):578--585, 2015.

\bibitem[Ric86]{Ricci:1986aa}
F.~Ricci.
\newblock A contraction of {${\rm SU}(2)$} to the {H}eisenberg group.
\newblock {\em Monatsh. Math.}, 101(3):211--225, 1986.

\bibitem[RR86]{Ricci:1986ab}
F.~Ricci and R.~L. Rubin.
\newblock Transferring {F}ourier multipliers from {${\rm SU}(2)$} to the
  {H}eisenberg group.
\newblock {\em Amer. J. Math.}, 108(3):571--588, 1986.

\bibitem[Ver70a]{Vergne1970a}
M.~Vergne.
\newblock Construction de sous-alg\`ebres subordonn\'ees \`a un \'el\'ement du
  dual d'une alg\`ebre de {L}ie r\'esoluble.
\newblock {\em C. R. Acad. Sci. Paris S\'er. A-B}, 270:A173--A175, 1970.

\bibitem[Ver70b]{Vergne1970b}
M.~Vergne.
\newblock Construction de sous-alg\`ebres subordonn\'ees \`a un \'el\'ement du
  dual d'une alg\`ebre de {L}ie r\'esoluble.
\newblock {\em C. R. Acad. Sci. Paris S\'er. A-B}, 270:A704--A707, 1970.
\bibitem[Vog98a]{Vogan:1998aa}
D.~Vogan.
\newblock The method of coadjoint orbits for real reductive groups.
\newblock {\em Representation theory of Lie groups (Park City, UT, 1998)},
  8:179--238, 1998.
\bibitem[TV17]{Valette:2017aa}
T.~Schodler and A.~Valette.
\newblock private communication, 2017.



\bibitem[Woo80]{Woodhouse:1980aa}
N.~Woodhouse.
\newblock {\em Geometric quantization}.
\newblock The Clarendon Press, Oxford University Press, New York, 1980.
\newblock Oxford Mathematical Monographs.

\end{thebibliography}
\end{document}